\documentclass[final,11pt]{elsarticle}

\usepackage{verbatim,a4wide}

\usepackage{amsmath}
\usepackage{amssymb}
\usepackage{amsthm}

\usepackage{xcolor}

\usepackage{hyperref}

\usepackage[cp1250]{inputenc}
\usepackage[OT4]{fontenc}
\usepackage[english]{babel}

\usepackage{datetime}

\numberwithin{equation}{section}
\numberwithin{figure}{section}
\numberwithin{table}{section}

\newtheorem{theorem}{Theorem}[section]
\newtheorem{lemma}[theorem]{Lemma}
\newtheorem{remark}[theorem]{Remark}
\newtheorem{corollary}[theorem]{Corollary}

\newtheorem{assumption}[theorem]{Assumption}

\begin{document}

\begin{frontmatter}

\title{Dual Gauss--Legendre polynomials}

\author[A1]{Pawe{\l} Wo\'{z}ny\corref{cor}}
\ead{Pawel.Wozny@cs.uni.wroc.pl}

\cortext[cor]{Corresponding author.}

\address[A1]{Institute of Computer Science, University of Wroc{\l}aw,
             ul.~Joliot-Curie 15, 50-383 Wroc{\l}aw, Poland}

\begin{abstract}
We define and investigate two families of dual polynomials associated with the 
Gauss--Legendre polynomials, which have recently found interesting applications 
in computer graphics. Using the presented results, one can derive
representations of the Gauss--Legendre polynomials, construct the dual bases 
for Lagrange bases and solve certain approximation problems arising, for 
example, in CAGD.
\end{abstract}

%
%
%
%

\begin{keyword}
dual bases, Gauss-Legendre polynomials, Jacobi polynomials, least-squares 
approximation, CAGD 
\end{keyword}

\end{frontmatter}

\section{Introduction}                                  \label{S:Introduction}

The \textit{Gauss--Legendre polynomials} (\textit{GL polynomials} for short) 
and the \textit{Gauss--Legendre curves} (\textit{GL curves} for short) 
associated with them were recently introduced by Moon et al.~in~\cite{Moon2023} 
(see also~\cite{Kim2026,Moon2026,RH2026}). Both GL polynomials 
and GL curves possess several interesting properties and therefore constitute 
attractive tools with numerous potential applications in numerical analysis and 
computer graphics. For example, GL curves have very good shape control 
properties. However, they do not lie within the convex hull of their control 
points, but they closely follow the control polygon, even at high degrees.  

The GL polynomials are related to the classical \textit{Legendre polynomials} 
$P_k\in\Pi_{k}\setminus\Pi_{k-1}$ $(k\in\mathbb N)$ which are orthogonal on 
$[-1,1]$ with respect to the $L^2$ inner product, i.e.,
$$
\int_{-1}^{1}P_i(x)P_j(x)\,\mathrm{d}x=\delta_{ij}\frac{2}{2i+1}\qquad
                                                             (i,j=0,1,\ldots),
$$
and satisfy the normalization condition $P_k(1)=1$ $(k=0,1,\ldots)$. See, e.g., 
\cite{KLS2010,STW2011}.

Here, $\Pi_k$ denotes the space of univariate polynomials of degree at most 
$k$ ($\Pi_{-1}:=\emptyset$), and $\delta_{ij}$ is the \textit{Kronecker delta}
($\delta_{ii}:=1$, $\delta_{ij}:=0$ for $i\neq j$).

\begin{assumption}\label{A:Assumption-1}
From now on, let $n\in\mathbb N$ be fixed. Let $\tau_i\equiv\tau^{(n)}_i$ 
$(i=1,2,\ldots,n;\ n>0)$,
\begin{equation*}
-1<\tau_1<\tau_2<\cdots<\tau_n<1,
\end{equation*}
be the \textit{zeros} of the $n$th Legendre polynomial, i.e., 
$P_n(\tau_i)=0$ $(1\leq i\leq n)$.

In this paper, we assume that the zeros $\tau^{(n)}_i$ $(i=1,2,\ldots,n)$, as
well as the {\rm Gauss--Legendre weights}, 
\begin{equation}\label{E:Def-omega_i}
\omega^n_i\equiv\omega_i:=\frac{2}{nP_{n-1}(\tau_i)P_n'(\tau_i)}
                                                       \qquad(i=1,2,\ldots,n),
\end{equation}
are known with high numerical accuracy, since they can be precomputed using, 
for example, the results presented in~\cite{HT2013,JM2018}. 
\end{assumption}

\begin{remark}\label{R:GL-quadrature}
Recall that 
\begin{equation}\label{E:GL-quadrature}
Q^{GL}_n(f):=\sum_{i=1}^{n}\omega_if(\tau_i)
             \approx\int_{-1}^{1}f(x)\,\mathrm{d}x
\end{equation}
is the {\rm Gauss--Legendre quadrature}, which is exact for all polynomials
of degree at most $2n-1$, i.e.,
$$
\int_{-1}^{1}p(x)\,\mathrm{d}x=Q^{GL}_n(p)
$$
for every $p\in\Pi_{2n-1}$ (see, e.g., \cite{WG}).
\end{remark}

The GL polynomials $F^n_i\in\Pi_{n}\setminus\Pi_{n-1}$ $(i=0,1,\ldots,n)$ are 
given by
\begin{equation}\label{E:Def-F^n_i}
F^n_i(t):=G^n_i(t)-G^n_{i+1}(t),
\end{equation}
where $G^n_0(t)=-G^n_{n+1}(t)\equiv\frac{1}{2}$, and
\begin{equation}\label{E:Def-G^n_i}
G^n_i(t):=\frac{nP_{n-1}(\tau_i)}{2}
               \int_{-1}^{t}\frac{P_n(x)}{x-\tau_i}\,\mathrm{d}x-\frac{1}{2}
\end{equation}
for $i=1,2,\ldots,n$.

For efficient methods for the evaluation of GL polynomials and their linear 
combinations (in particular, $d$-dimensional GL curves), we refer the reader 
to~\cite{ChW2026} (see also \cite{RH2026}).

In this paper, we define and study the \textit{dual polynomials} for the 
polynomials $F^n_i$ and $G^n_i$ with respect to the Jacobi 
inner product. The resulting dual systems are expected to be useful in 
computer-aided geometric design (CAGD) and numerical analysis, particularly in 
some approximation problems.

This work is motivated by the context outlined in~\cite{Moon2023}, where GL 
polynomials were shown to inherit several structural properties of Bernstein 
polynomials, which play a fundamental role in approximation theory and computer 
graphics. In a related direction, \textit{dual Bernstein polynomials} 
(see~\cite{LW2006})---defined via duality with respect to the (shifted) Jacobi 
inner product---have proved to be a powerful tool, for example in addressing 
the \textit{degree reduction problem} for parametric \textit{B\'{e}zier curves} 
(see~\cite{WL2009}), which are central objects in CAD and CAGD (see, e.g., 
\cite{Agoston2005,Farin2002}).

The paper is organized as follows. Section~\ref{S:DualBases} contains a brief 
overview of the theory and applications of dual bases. In 
Section~\ref{S:New-dual}, which constitutes the main part of the paper, we 
derive explicit representations for the dual $G^n_i$ polynomials as well as for 
the dual GL polynomials. Finally, Section~\ref{S:Applications} is devoted to 
selected applications of the results presented in this article.

\section{Dual bases}                                       \label{S:DualBases}

Let $B_m:=\{b^m_0, b^m_1,\ldots,b^m_m\}$ $(m\in\mathbb N)$ be a set of linearly 
independent functions. Consider the space ${\cal B}_m:=\operatorname{span}B_m$ 
with an inner product 
$\left<\cdot,\cdot\right>:{\cal B}_m\times{\cal B}_m\to\mathbb R$. The set 
$D_m:=\{d^m_0,d^m_1,\ldots,d^m_m\}\subset{\cal B}_m$ is called a \textit{dual 
basis} for the basis $B_m$ of the space ${\cal B}_m$ with respect to the inner 
product $\left<\cdot,\cdot\right>$ if and only if
$$
{\cal D}_m:=\mbox{span}\;D_m={\cal B}_m\qquad\mbox{and}\qquad 
                               \left<b^m_i,d^m_j\right>=\delta_{ij}
                                                      \quad(0\leq i,j\leq m).
$$

The dual bases are commonly used, for example, in approximation theory, 
numerical analysis, and CAGD. We now outline their two main properties.

\begin{enumerate}
\itemsep 1ex

\item \textit{Representation.} Any function $f\in{\cal B}_m$ can be represented 
in the basis $B_m$ as follows:
$$
f=\sum_{k=0}^{m}\left<f,d^m_k\right>b^m_k.
$$

\item \textit{Approximation.} For a given function $g$, an 
\textit{optimal element} $p^\ast\in{\cal B}_m$ in the sense of the 
\textit{least-squares} approximation has the form
$$
p^\ast=\sum_{k=0}^{m}\left<g,d^m_k\right>b^m_k,
$$
which means
$$
||g-p^\ast||=\min_{p\in{\cal B}_m}||g-p||,
$$
where $||\cdot||:=\sqrt{\left<\cdot,\cdot\right>}$. 

\end{enumerate}

Thus, the dual basis allows us to find the optimal element without using an 
orthogonal basis. This is very attractive when we want to solve approximation 
problems in a specific, non-orthogonal basis, which is often necessary. Note 
that the incorporation of dual bases can very often result in algorithms with 
lower computational complexity. For these reasons, methods for constructing 
dual bases have been intensively studied in recent years. It should also be 
mentioned that, for many important bases, the corresponding dual bases are 
known explicitly, particularly in the polynomial case, which will be considered 
in the following paragraph.

For efficient methods for construction of the dual bases and their further 
properties and applications, see, e.g., \cite{PW2013,PW2014}, as 
well as \cite{BJ1998,BJ2007,ZC1987,LW2006b,LW2006,LW2011,LW2011b,LWK2011,
LZLW2009,RN2007,RN2008,W2012,WL2009,WL2010} (dual Bernstein polynomials), 
\cite{RNG} (dual polynomial bases), \cite{GXZ06} (dual B-spline functional), 
\cite{Zhang2010,Zhang2009a,Zhang2009b} (dual Wang-B\'{e}zier and 
B\'{e}zier-Said-Wang type generalized Ball polynomials), \cite{Zhang2009c} 
(dual NS power basis) and the references cited therein.

\subsection{Dual polynomial bases}                    \label{SS:DualPolyBases}

Let us consider the polynomial case, i.e., ${\cal B}_m=\Pi_m$. Let 
$Q_m:=\{q_0,q_1,\ldots,q_m\}$ be the set of orthogonal polynomials with respect 
to the inner product $\left<\cdot,\cdot\right>$. This means that
\begin{equation}\label{E:Orthogonality}
\left<q_i,q_j\right>=\delta_{ij}h_i\qquad (i,j=0,1,\ldots,m;\;h_i>0),
\end{equation}
and $q_i\in\Pi_{i}\setminus\Pi_{i-1}$ $(0\leq i\leq m)$.

Using the following theorem, one can construct the dual basis of $B_m$ in terms 
of an orthogonal basis of $\Pi_m$.

\begin{theorem}[see Lemma 2.1 in \cite{LW2006}]\label{T:Thm-LW2006-I}
Let $c_{ij}$ be the coefficients in
\begin{equation*}
q_i=\sum_{j=0}^{m}c_{ij}b^m_j\qquad (i=0,1,\ldots,m).
\end{equation*}
Then the elements $d^{m}_i$ $(0\leq i\leq m)$ of the dual basis $D_m$ are 
given by
$$
d^{m}_i=\sum_{j=0}^{m}h_j^{-1}c_{ji}q_j\qquad (i=0,1,\ldots,m).
$$
\end{theorem}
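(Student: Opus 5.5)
The plan is to verify directly that the proposed functions $d^m_i$ satisfy both defining conditions of a dual basis: they lie in ${\cal B}_m=\Pi_m$ and span it, and they satisfy $\left<b^m_k,d^m_i\right>=\delta_{ki}$. Membership in $\Pi_m$ is immediate, since each $d^m_i$ is a linear combination of the orthogonal polynomials $q_0,\ldots,q_m\in\Pi_m$. The main work is the biorthogonality relation, and it is convenient to handle it by inverting the change of basis.

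Let $C=[c_{ij}]_{i,j=0}^m$. Since $Q_m$ and $B_m$ are both bases of $\Pi_m$ ($Q_m$ because $\deg q_i=i$), $C$ is invertible. Write $B_m$ in terms of $Q_m$ as $b^m_k=\sum_{l=0}^m e_{kl}q_l$ with $E=[e_{kl}]=C^{-1}$. Indeed, substituting gives $q_i=\sum_j c_{ij}\sum_l e_{jl}q_l$, so $CE=I$. Using orthogonality \eqref{E:Orthogonality},
$$
\left<b^m_k,d^m_i\right>=\sum_{l=0}^m\sum_{j=0}^m e_{kl}\,h_j^{-1}c_{ji}\left<q_l,q_j\right>
=\sum_{j=0}^m e_{kj}c_{ji}=(EC)_{ki}=\delta_{ki},
$$
because $EC=I$ as well, $E$ being a two-sided inverse of the square matrix $C$.

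Finally, the spanning condition ${\cal D}_m={\cal B}_m$ follows from biorthogonality. If $\sum_i\alpha_i d^m_i=0$, pairing with $b^m_k$ gives $\alpha_k=0$. Hence the $m+1$ functions $d^m_i$ are linearly independent in the $(m+1)$-dimensional space $\Pi_m$, so they span it. Alternatively, the coefficient matrix of the $d^m_i$ with respect to $Q_m$ is $\operatorname{diag}(h_j^{-1})\,C$, which is invertible.

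There is no real obstacle here. The only point needing care is the index bookkeeping: the transpose $c_{ji}$ in the definition of $d^m_i$ is exactly what makes the product come out as $EC$ rather than some other product. One also needs the observation that the right inverse $E$ of $C$ is also a left inverse, which holds because $C$ is square.
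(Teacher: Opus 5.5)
Your verification is correct and complete. The paper itself gives no proof of this statement; it only cites Lemma~2.1 of \cite{LW2006}. The useful comparison is therefore with the standard derivation of such formulas, which runs in the opposite direction and never inverts $C$. Since $d^m_i\in\Pi_m$, its Fourier expansion in the orthogonal basis is $d^m_i=\sum_{j=0}^{m}h_j^{-1}\left<d^m_i,q_j\right>q_j$. Substituting $q_j=\sum_{k}c_{jk}b^m_k$ gives $\left<d^m_i,q_j\right>=\sum_k c_{jk}\left<b^m_k,d^m_i\right>=c_{ji}$ at once.

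That derivation shows any dual basis must have the stated form, which gives uniqueness. It presupposes existence, for example via nonsingularity of the Gram matrix of $B_m$. Your route instead establishes existence and the spanning condition constructively, at the cost of the two-sided-inverse bookkeeping.

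To fully justify the definite article in ``the dual basis'', you could add a one-line uniqueness remark. If $d^m_i$ and $\tilde d^m_i$ both satisfy the biorthogonality relations, then $d^m_i-\tilde d^m_i\in\Pi_m$ is orthogonal to every $b^m_k$, hence to itself, and therefore vanishes.
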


We also have an identity connecting the three families of polynomials, namely 
the elements of the basis $B_m$, the dual basis $D_m$, and the orthogonal basis 
$Q_{m+1}$.

\begin{theorem}[see Lemma 2.2 in \cite{LW2006}]\label{T:Thm-LW2006-II}
The following identity holds:
\begin{equation*}
k_m(x,y)=\sum_{i=0}^{m}b^m_i(x)d^m_i(y),
\end{equation*}
where $k_m(x,y)$ is the {\rm Christoffel--Darboux kernel},
\begin{equation}\label{E:Ch-D-kernel}
k_m(x,y):=\frac{\ell_m}{\ell_{m+1}h_m}
                           \frac{q_{m+1}(x)q_m(y)-q_m(x)q_{m+1}(y)}{x-y}.
\end{equation}
Here $\ell_m$ and $\ell_{m+1}$ denote the leading coefficients of the 
polynomials $q_m$ and $q_{m+1}$, respectively (see 
	also~\eqref{E:Orthogonality}).
\end{theorem}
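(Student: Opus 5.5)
The plan is to prove the identity by showing that the kernel $k_m(x,y)$ and the sum $\sum_{i=0}^{m}b^m_i(x)d^m_i(y)$ both reduce to the orthogonal expansion
$$
\sum_{j=0}^{m}h_j^{-1}q_j(x)q_j(y).
$$
This splits into two independent steps.

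\textbf{Step 1 (the sum side).} I will use Theorem~\ref{T:Thm-LW2006-I}, which gives $d^m_i=\sum_{j}h_j^{-1}c_{ji}q_j$. Substituting this into the sum and exchanging the order of summation gives
$$
\sum_{i=0}^{m}b^m_i(x)d^m_i(y)=\sum_{j=0}^{m}h_j^{-1}q_j(y)\sum_{i=0}^{m}c_{ji}b^m_i(x).
$$
By the definition of the coefficients $c_{ji}$, the inner sum equals $q_j(x)$. So the sum side equals the expansion above.

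\textbf{Step 2 (the kernel side).} Here I need the classical Christoffel--Darboux formula
$$
\sum_{j=0}^{m}\frac{q_j(x)q_j(y)}{h_j}=\frac{\ell_m}{\ell_{m+1}h_m}\,\frac{q_{m+1}(x)q_m(y)-q_m(x)q_{m+1}(y)}{x-y}.
$$
This requires that $\left<\cdot,\cdot\right>$ is an inner product on polynomials of degree up to $m+1$ for which multiplication by $x$ is symmetric, i.e. $\left<xp,r\right>=\left<p,xr\right>$. This holds for the Jacobi-type integral inner products used throughout the paper. I will derive the formula from the three-term recurrence:
$$
xq_j=\frac{\ell_j}{\ell_{j+1}}q_{j+1}+\beta_jq_j+\gamma_jq_{j-1},\qquad \gamma_j=\frac{\ell_{j-1}h_j}{\ell_jh_{j-1}},
$$
where the value of $\gamma_j$ follows from the symmetry of multiplication by $x$ and from comparing leading coefficients. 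Next I form $(x-y)q_j(x)q_j(y)/h_j$ and apply the recurrence in $x$ and in $y$. The $\beta_j$ terms cancel. What remains is a difference $D_{j}-D_{j-1}$, where
$$
D_j:=\frac{\ell_j}{\ell_{j+1}h_j}\bigl(q_{j+1}(x)q_j(y)-q_j(x)q_{j+1}(y)\bigr).
$$
Summing over $j=0,\ldots,m$ telescopes to $D_m$, using $q_{-1}:=0$.

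\textbf{Main obstacle.} The expected difficulty lies in the constant bookkeeping of Step 2: checking that the coefficient $\gamma_j/h_j$ coincides with $\ell_{j-1}/(\ell_jh_{j-1})$, which is exactly what makes the sum telescope. Alternatively, I could avoid the recurrence altogether. Both sides, viewed as polynomials in $x$ of degree at most $m$, reproduce every $p\in\Pi_m$ under $\left<\cdot,p\right>$ in $y$. For the kernel this reproducing property would come from orthogonality of $q_{m+1}$ to $\Pi_m$. I would fall back on this argument only if the constant check becomes messy.
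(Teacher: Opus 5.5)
Your proof is correct. The paper gives no proof of its own and only cites \cite{LW2006}, but in the proof of the Lagrange-dual theorem it invokes the expansion $k_m(x,y)=\sum_{i=0}^{m}h_i^{-1}q_i(x)q_i(y)$; so your argument (Theorem~\ref{T:Thm-LW2006-I} reduces the sum to that expansion, and the classical Christoffel--Darboux formula, which your telescoping with $\gamma_j/h_j=\ell_{j-1}/(\ell_jh_{j-1})$ correctly establishes, finishes it) is exactly the intended route. Your caveat that the inner product must extend to $\Pi_{m+1}$ with $\left<xp,r\right>=\left<p,xr\right>$ is a real hypothesis the paper leaves implicit (without it the identity can already fail for $m=1$), and it does hold for the Jacobi inner product used throughout.
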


\section{New dual bases}                                    \label{S:New-dual}

We will use Theorem~\ref{T:Thm-LW2006-I} to derive the dual polynomials for 
the polynomials $G^n_i$ and $F^n_i$ with respect to the \textit{Jacobi inner
product},
\begin{equation}\label{E:JacobiInnerP}
\left<f,g\right>_{\alpha,\beta}:=\int_{-1}^{1}(1-x)^\alpha(1+x)^\beta
f(x)g(x)\,\mathrm{d}x\qquad(\alpha,\beta>-1).
\end{equation}
Before doing so, however, a lemma and a corollary are also needed.

\begin{lemma}\label{L:Base-G}
The polynomials $G^n_0, G^n_1,\ldots, G^n_n$ (see~\eqref{E:Def-G^n_i}) form 
a basis of $\Pi_n$. Moreover, any polynomial $p\in\Pi_n$ can be represented in 
this basis as follows:
$$
p(t)=(p(-1)+p(1))G^n_0(t)+\sum_{i=1}^np'(\tau_i)\omega_iG^n_i(t)
$$
(see~Assumption~\ref{A:Assumption-1}).
\end{lemma}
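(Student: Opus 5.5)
Since there are $n+1$ polynomials $G^n_0,\ldots,G^n_n$ and $\dim\Pi_n=n+1$, linear independence follows once we show that every $p\in\Pi_n$ equals the stated combination. So the plan is to prove the representation formula directly. Independence then comes for free: if $\sum_i a_iG^n_i\equiv 0$, the spanning property forces the $n+1$ vectors to be a basis.

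The key observation concerns derivatives. By \eqref{E:Def-G^n_i}, for $i\ge1$,
$$
(G^n_i)'(t)=\frac{nP_{n-1}(\tau_i)}{2}\,\frac{P_n(t)}{t-\tau_i}.
$$
Since $P_n(\tau_j)=0$, this vanishes at $\tau_j$ for $j\neq i$. At $t=\tau_i$ it equals $\tfrac{n}{2}P_{n-1}(\tau_i)P_n'(\tau_i)=1/\omega_i$ by \eqref{E:Def-omega_i}. Hence $\omega_i(G^n_i)'$ is exactly the Lagrange fundamental polynomial $L_i\in\Pi_{n-1}$ for the nodes $\tau_1,\ldots,\tau_n$.

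Now set
$$
q(t):=(p(-1)+p(1))G^n_0(t)+\sum_{i=1}^n p'(\tau_i)\omega_iG^n_i(t).
$$
Then $q'=\sum_i p'(\tau_i)L_i$. Because $p'\in\Pi_{n-1}$, Lagrange interpolation at the $n$ nodes is exact, so $q'=p'$ and therefore $p-q$ is a constant.

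To pin down the constant, compare the values $q(-1)+q(1)$ and $p(-1)+p(1)$.
\begin{itemize}
\item For $i\ge1$ we have $G^n_i(-1)=-\tfrac12$ and $G^n_i(1)=\tfrac{nP_{n-1}(\tau_i)}{2}\int_{-1}^1\frac{P_n(x)}{x-\tau_i}\,dx-\tfrac12$.
\item The integrand $P_n(x)/(x-\tau_i)=P_n'(\tau_i)L_i(x)$ lies in $\Pi_{n-1}$. By Remark~\ref{R:GL-quadrature} its integral equals $\omega_iP_n'(\tau_i)$, so $G^n_i(1)=\tfrac{n}{2}P_{n-1}(\tau_i)\omega_iP_n'(\tau_i)-\tfrac12=1-\tfrac12=\tfrac12$.
\item Consequently $G^n_i(-1)+G^n_i(1)=0$ for $i\ge1$, while $G^n_0(-1)+G^n_0(1)=1$.
\end{itemize}
It follows that $q(-1)+q(1)=p(-1)+p(1)$. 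Writing $p=q+c$ gives $2c=0$, so $c=0$ and $p=q$.

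The only step needing care is evaluating $\int_{-1}^1 P_n(x)/(x-\tau_i)\,dx$. The main risk there is a sign or normalization slip when matching against \eqref{E:Def-omega_i}. One can either use the exactness of the Gauss--Legendre quadrature as above, or cite the standard Christoffel-number formula for $\omega_i$. Everything else is routine.
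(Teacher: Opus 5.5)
Your proof is correct and follows essentially the same route as the paper. Both arguments expand $p'\in\Pi_{n-1}$ in the Lagrange basis at the Legendre zeros, use the identity $L^n_i=\omega_i\,(G^n_i)'$ (which you check directly at the nodes instead of citing Moon et al.), and fix the additive constant with $\int_{-1}^{1}L^n_i(x)\,\mathrm{d}x=\omega_i$. The only difference is presentational: the paper integrates $p'$ over $[-1,t]$ and computes $\tfrac12\sum_i p'(\tau_i)\omega_i=\tfrac12(p(1)-p(-1))$, whereas you match $p(-1)+p(1)$ using $G^n_i(\pm1)=\pm\tfrac12$.
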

\begin{proof}
Let us define 
\begin{equation}\label{E:Def-L^n_i} 
L^n_i(x):=\prod_{j=1 \atop j\neq i}^{n}\frac{x-\tau_j}{\tau_i-\tau_j}
\end{equation}
for $i=1,2,\ldots,n$. Then 
\begin{equation}\label{E:Omega_i-L^n_i}
\omega_i=\int_{-1}^{1}L^n_i(x)\,\mathrm{d}x\qquad (1\leq i\leq n)
\end{equation}
(cf.~\eqref{E:Def-omega_i} and Remark~\ref{R:GL-quadrature}). Obviously, 
for every polynomial $p\in\Pi_n$, we also have
$$
\Pi_{n-1}\ni p'(x)=\sum_{i=1}^{n}p'(\tau_i)L^n_i(x).
$$

Observe that
\begin{equation}\label{E:L^n_i-G^n_i}
L^n_i(x)=\omega_i\frac{nP_{n-1}(\tau_i)}{2}\frac{P_n(x)}{x-\tau_i}
\end{equation}
(cf.~\eqref{E:Def-G^n_i} and the proof of \cite[Lemma 5]{Moon2023}) and thus
$$
p'(x)=\sum_{i=1}^{n}p'(\tau_i)\omega_i\frac{nP_{n-1}(\tau_i)}{2}
                                                       \frac{P_n(x)}{x-\tau_i}.
$$

Integrating the above identity over $x\in[-1,t]$, where $-1\leq t\leq 1$, 
yields
$$
p(t)-p(-1)=\sum_{i=1}^{n}p'(\tau_i)\omega_i\left(G^n_i(t)+\frac{1}{2}\right),
$$
but
\begin{eqnarray*}
\frac{1}{2}\sum_{i=1}^{n}p'(\tau_i)\omega_i&=&
        \frac{1}{2}\int_{-1}^{1}
                  \left(\sum_{i=1}^{n}p'(\tau_i)L^n_i(x)\right)\mathrm{d}x\\
        &=&\frac{1}{2}\int_{-1}^{1}p'(x)\,\mathrm{d}x=
                                        \frac{1}{2}\left(p(1)-p(-1)\right)
\end{eqnarray*}
(see~\eqref{E:Omega_i-L^n_i}) and hence the result.
\end{proof}

Taking into account that
$$
G^n_i(t)+\frac{1}{2}=\sum_{j=i}^{n}F^n_j(t)\qquad (1\leq i\leq n)
$$
(cf.~\eqref{E:Def-F^n_i}), as well as using the \textit{partition of unity 
property} for GL polynomials (\cite[\S5.1]{Moon2023}),
$$
\sum_{i=0}^{n}F^n_i(t)\equiv 1\qquad (t\in\mathbb R),
$$
we obtain the following corollary, which is in agreement 
with~\cite[Theorem 7]{Moon2023}.

\begin{corollary}\label{C:Corollary-1}
If $p\in\Pi_n$ then
$$
p(t)=\sum_{i=0}^{n}\left(p(-1)+\sum_{j=1}^{i}p'(\tau_j)\omega_j\right)F^n_i(t).
$$
\end{corollary}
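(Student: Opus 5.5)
The plan is to rewrite the $G$-expansion of Lemma~\ref{L:Base-G} in the $F$-basis by summation by parts. The two tools are the telescoping identity $G^n_i+\frac12=\sum_{j=i}^{n}F^n_j$ for $1\le i\le n$ and the partition of unity $\sum_{i=0}^{n}F^n_i\equiv1$, both noted above. Write $a_j:=p'(\tau_j)\omega_j$. Lemma~\ref{L:Base-G} together with $G^n_0\equiv\frac12$ gives
$$
p(t)=\tfrac12\bigl(p(-1)+p(1)\bigr)+\sum_{j=1}^{n}a_j G^n_j(t).
$$

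First, substitute $G^n_j=\sum_{i=j}^{n}F^n_i-\frac12$ into this sum. Exchanging the order of summation gives
$$
\sum_{j=1}^{n}a_jG^n_j=\sum_{i=1}^{n}\Bigl(\sum_{j=1}^{i}a_j\Bigr)F^n_i-\tfrac12\sum_{j=1}^{n}a_j.
$$

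Second, simplify the leftover constant. The proof of Lemma~\ref{L:Base-G} showed that $\sum_{j=1}^{n}a_j=p(1)-p(-1)$, which follows from \eqref{E:Omega_i-L^n_i}. Hence the constant terms combine to
$$
\tfrac12\bigl(p(-1)+p(1)\bigr)-\tfrac12\bigl(p(1)-p(-1)\bigr)=p(-1).
$$

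Third, write this constant as $p(-1)=\sum_{i=0}^{n}p(-1)F^n_i$ using the partition of unity, and add it to the $F$-expansion from the first step. The coefficient of $F^n_i$ is then $p(-1)+\sum_{j=1}^{i}a_j$; for $i=0$ the inner sum is empty. This is exactly the claimed formula.

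The only delicate point is bookkeeping: making sure the index ranges in the interchanged double sum are right and that the $i=0$ term has an empty inner sum. The constant cancellation is immediate once the identity $\sum_j a_j=p(1)-p(-1)$ is reused, so I expect no real obstacle.
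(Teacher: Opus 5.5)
Your proposal is correct and follows the same route as the paper: start from the $G$-expansion of Lemma~\ref{L:Base-G}, substitute $G^n_j=\sum_{i=j}^{n}F^n_i-\frac12$, and use the partition of unity to absorb the constant term. You also check explicitly that the leftover constant reduces to $p(-1)$ via $\sum_{j=1}^{n}p'(\tau_j)\omega_j=p(1)-p(-1)$, which is bookkeeping the paper leaves implicit.
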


\subsection{Jacobi polynomials}                              \label{SS:Jacobi}

Recall that \textit{Jacobi polynomials} 
$P^{(\alpha,\beta)}_k\in\Pi_{k}\setminus\Pi_{k-1}$ $(k\in\mathbb N)$ are 
orthogonal with respect to the inner product~\eqref{E:JacobiInnerP}, i.e.,
\begin{equation}\label{E:JacobiOrtho}
\left<P^{(\alpha,\beta)}_k,P^{(\alpha,\beta)}_\ell\right>_{\alpha,\beta}=
\delta_{k\ell}h^{(\alpha,\beta)}_{k}\qquad(k,\ell=0,1,\ldots),
\end{equation}
where
\begin{equation}\label{E:Def-h_k}
h^{(\alpha,\beta)}_{k}:=2^\sigma\frac{\Gamma(k+\alpha+1)\Gamma(k+\beta+1)}
                                    {k!(2k+\sigma)\Gamma(k+\sigma)}
                                                         \qquad(k\in\mathbb N)
\end{equation}
with $\sigma:=\alpha+\beta+1$. 

The \textit{leading coefficient} of $P^{(\alpha,\beta)}_{k}$ is
\begin{equation}\label{E:Jacobi-lc}
\ell^{(\alpha,\beta)}_k=\frac{(k+\sigma)_k}{k!2^k}\qquad (k=0,1,\ldots),
\end{equation}
where $(z)_i$ $(z\in\mathbb R;\ i\in\mathbb N)$ denotes the \textit{Pochhammer
symbol},
$$
(z)_0:=1,\qquad (z)_i:=z(z+1)\cdots(z+i-1)\qquad (i=1,2,\ldots).
$$
We also have 
\begin{equation}\label{E:Jacobi+1-1}
P^{(\alpha,\beta)}_k(1)=\frac{(\alpha+1)_k}{k!},\qquad
P^{(\alpha,\beta)}_k(-1)=(-1)^k\frac{(\beta+1)_k}{k!},
\end{equation}
and
\begin{equation}\label{E:DiffJacobi}
\frac{\mathrm{d}}{\mathrm{d}x}P^{(\alpha,\beta)}_k(x)=
                      \frac{k+\sigma}{2}P^{(\alpha+1,\beta+1)}_{k-1}(x),                      
\end{equation}
where $k=0,1,\ldots$. Here, and in the sequel, we adopt the convention that 
\begin{equation}\label{E:Convention}
P^{(\alpha,\beta)}_{-1}(x)\equiv0.
\end{equation} 

Clearly, the Legendre polynomials belong to the Jacobi polynomials family, 
since $P_k(x)\equiv P^{(0,0)}_k(x)$ for $k\in\mathbb N$.

For further properties and applications of Jacobi polynomials, see, e.g., 
\cite{KLS2010,STW2011}. 

\subsection{Dual \texorpdfstring{$G^n_j$}{Gnj} polynomials}             
                                                    \label{SS:G^n_j-dual-poly}

The \textit{dual $G^n_j$ polynomials} $dG^n_j(t;\alpha,\beta)\in\Pi_n$ 
$(0\leq j\leq n;\,\alpha,\beta>-1)$ are defined to satisfy the following
conditions:
$$
\left<G^n_i,dG^n_j(\cdot;\alpha,\beta)\right>_{\alpha,\beta}=
                                          \delta_{ij}\qquad (0\leq i,j\leq n).
$$

To derive an explicit representation of the dual polynomials 
$dG^n_j(t;\alpha,\beta)$, we apply Theorem~\ref{T:Thm-LW2006-I}, where the 
Jacobi polynomials---as satisfying the orthogonality 
relation~\eqref{E:JacobiOrtho}---are expressed in terms of the 
polynomials~\eqref{E:Def-G^n_i},
\begin{eqnarray*}
P^{(\alpha,\beta)}_k(t)&=&
    \left(P^{(\alpha,\beta)}_{k}(-1)+P^{(\alpha,\beta)}_{k}(1)\right)G^n_0(t)+
        \sum_{i=1}^{n}\omega_i\frac{\mathrm{d}}{\mathrm{d}x}
                        P^{(\alpha,\beta)}_{k}(x)\Big|_{x:=\tau_i}G^n_i(t)\\
&=&\frac{1}{k!}((\alpha+1)_k+(-1)^k(\beta+1)_k)G^n_0(t)+
                \frac{k+\sigma}{2}\sum_{i=1}^{n}
                        \omega_iP^{(\alpha+1,\beta+1)}_{k-1}(\tau_i)G^n_i(t),\\
\end{eqnarray*}
where $k=0,1,\ldots,n$, which follows from \eqref{E:Jacobi+1-1}, the 
relation~\eqref{E:DiffJacobi} and Lemma~\ref{L:Base-G}.

As a consequence, we obtain the explicit formula for the dual $G^n_j$ 
polynomials.

\begin{theorem}\label{T:dual-G^n_i}
The dual polynomials $dG^n_j(t;\alpha,\beta)$ 
$(0\leq j\leq n;\,\alpha,\beta>-1)$ are represented in the Jacobi polynomial 
basis as follows:
\begin{equation}\label{E:dual-G^n_i-rep}
dG^n_j(t;\alpha,\beta)=
      \sum_{i=0}^{n}\frac{1}{h^{(\alpha,\beta)}_i}c^{(\alpha,\beta)}_{ji}
                             P^{(\alpha,\beta)}_{i}(t)\qquad (j=0,1,\ldots,n)
\end{equation}
(cf.~\eqref{E:Def-h_k}), and
$$
c^{(\alpha,\beta)}_{ji}:=\left\{
\begin{array}{lcl}
\displaystyle
\frac{1}{i!}((\alpha+1)_i+(-1)^i(\beta+1)_i)&:& (j=0),\\[1.5ex]
\displaystyle
\omega_j\frac{i+\sigma}{2}P^{(\alpha+1,\beta+1)}_{i-1}(\tau_j)
                        &:& (1\leq j\leq n)
\end{array}
\right.
$$
for $i=0,1,\ldots,n$ (cf.~the convention~\eqref{E:Convention}).
\end{theorem}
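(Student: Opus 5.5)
The plan is to apply Theorem~\ref{T:Thm-LW2006-I} with $m:=n$, $b^n_j:=G^n_j$ $(0\leq j\leq n)$, the inner product $\left<\cdot,\cdot\right>_{\alpha,\beta}$, and the orthogonal system $q_i:=P^{(\alpha,\beta)}_i$ $(0\leq i\leq n)$. By~\eqref{E:JacobiOrtho}, these satisfy~\eqref{E:Orthogonality} with $h_i:=h^{(\alpha,\beta)}_i>0$, and $q_i\in\Pi_i\setminus\Pi_{i-1}$. Lemma~\ref{L:Base-G} guarantees that $G^n_0,\ldots,G^n_n$ form a basis of $\Pi_n$, so the hypotheses of Theorem~\ref{T:Thm-LW2006-I} hold. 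In particular, the expansion coefficients of each $P^{(\alpha,\beta)}_i$ in this basis are unique.

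The one substantive step is to identify the coefficients $c_{ij}$ in $q_i=\sum_j c_{ij}G^n_j$. This has already been done in the display preceding the statement. Lemma~\ref{L:Base-G} applied to $p:=P^{(\alpha,\beta)}_i\in\Pi_n$ gives the coefficient $P^{(\alpha,\beta)}_i(-1)+P^{(\alpha,\beta)}_i(1)$ for $G^n_0$. By~\eqref{E:Jacobi+1-1}, this equals $\frac{1}{i!}((\alpha+1)_i+(-1)^i(\beta+1)_i)$. For $G^n_j$ with $1\leq j\leq n$, the coefficient is $\omega_j\,\frac{\mathrm d}{\mathrm dx}P^{(\alpha,\beta)}_i(\tau_j)$, and by~\eqref{E:DiffJacobi} this equals $\omega_j\frac{i+\sigma}{2}P^{(\alpha+1,\beta+1)}_{i-1}(\tau_j)$.

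For $i=0$, the convention~\eqref{E:Convention} makes this last expression vanish, consistent with $P^{(\alpha,\beta)}_0$ being constant. This index case is the only point needing a remark. In the notation of the theorem, these coefficients are exactly $c^{(\alpha,\beta)}_{ji}$ (coefficient of $G^n_j$ in $P^{(\alpha,\beta)}_i$), so the index transposition matches that of Theorem~\ref{T:Thm-LW2006-I}.

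Substituting into $d^n_j=\sum_{i=0}^n h_i^{-1}c_{ij}q_i$ then yields~\eqref{E:dual-G^n_i-rep} directly. The main thing to be careful about is the index bookkeeping between $c_{ij}$ in Theorem~\ref{T:Thm-LW2006-I} and $c^{(\alpha,\beta)}_{ji}$ here. There is no analytic obstacle.
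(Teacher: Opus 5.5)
Your proposal is correct and is essentially the paper's own argument: it applies Theorem~\ref{T:Thm-LW2006-I} with $b^n_j:=G^n_j$ and $q_i:=P^{(\alpha,\beta)}_i$, and reads off the expansion coefficients from Lemma~\ref{L:Base-G} together with \eqref{E:Jacobi+1-1} and \eqref{E:DiffJacobi}. Your index bookkeeping is also right: $c^{(\alpha,\beta)}_{ji}$ is the coefficient of $G^n_j$ in $P^{(\alpha,\beta)}_i$, and the case $i=0$ is handled via \eqref{E:Convention}.
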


\begin{remark}\label{R:Clenshaw}
Note that from the representation~\eqref{E:dual-G^n_i-rep}, it follows that 
$dG^n_j(t;\alpha,\beta)$ can be evaluated for given $n$, $0\leq j\leq n$, 
$t\in\mathbb R$ and $\alpha,\beta>-1$ in $O(n)$ time using the well-known 
\textit{Clenshaw algorithm} (see, e.g., \cite{WG}).
\end{remark}

We show that there exists a closed-form expression for the dual polynomials 
$dG^n_j(t;\alpha,\beta)$ if $1\leq j\leq n$.

\begin{theorem}\label{T:dual-G^n_i-closed-form}
For $j=1,2,\ldots,n$, it holds that
\begin{equation}\label{E:dual-G^n_i-closed-form}
dG^n_j(t;\alpha,\beta)=2\omega_j\frac{(n+1)(n+\sigma)}
                                     {h^{(\alpha,\beta)}_n(2n+\sigma)_2}
  \frac{b^{(\alpha,\beta)}_{n+1,j}P^{(\alpha,\beta)}_{n}(t)
           -b^{(\alpha,\beta)}_{nj}P^{(\alpha,\beta)}_{n+1}(t)-
                                K^{(\alpha,\beta)}_n(\tau_j,t)}{\tau_j-t},
\end{equation}
where 
$b^{(\alpha,\beta)}_{ij}:=
 \frac{1}{2}(i+\sigma)P^{(\alpha+1,\beta+1)}_{i-1}(\tau_j)$ $(i=n,n+1)$, and
$$
K^{(\alpha,\beta)}_n(x,y):=
 \frac{P^{(\alpha,\beta)}_{n+1}(x)P^{(\alpha,\beta)}_{n}(y)-
       P^{(\alpha,\beta)}_{n}(x)P^{(\alpha,\beta)}_{n+1}(y)}{x-y}.
$$
\end{theorem}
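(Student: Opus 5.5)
The plan is to recognise the sum in Theorem~\ref{T:dual-G^n_i} as a derivative of the Christoffel--Darboux kernel (Theorem~\ref{T:Thm-LW2006-II}) and then differentiate the closed form of that kernel.

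\textbf{Step 1: rewrite the coefficients as derivatives.} For $1\leq j\leq n$, Theorem~\ref{T:dual-G^n_i} gives
$$
dG^n_j(t;\alpha,\beta)=\omega_j\sum_{i=0}^{n}\frac{1}{h^{(\alpha,\beta)}_i}\,\frac{i+\sigma}{2}P^{(\alpha+1,\beta+1)}_{i-1}(\tau_j)\,P^{(\alpha,\beta)}_i(t).
$$
By \eqref{E:DiffJacobi} and the convention \eqref{E:Convention}, the factor $\frac{i+\sigma}{2}P^{(\alpha+1,\beta+1)}_{i-1}(\tau_j)$ equals $\frac{\mathrm{d}}{\mathrm{d}x}P^{(\alpha,\beta)}_i(x)\big|_{x=\tau_j}$. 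Hence
$$
dG^n_j(t;\alpha,\beta)=\omega_j\,\frac{\partial}{\partial x}k_n(x,t)\Big|_{x=\tau_j},
\qquad
k_n(x,y):=\sum_{i=0}^{n}\frac{P^{(\alpha,\beta)}_i(x)P^{(\alpha,\beta)}_i(y)}{h^{(\alpha,\beta)}_i}.
$$

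\textbf{Step 2: use the closed form of the kernel.} The function $k_n(x,y)$ is the Christoffel--Darboux kernel \eqref{E:Ch-D-kernel} for $q_i=P^{(\alpha,\beta)}_i$. Equivalently, apply Theorem~\ref{T:Thm-LW2006-II} with $B_n$ taken to be the orthogonal basis itself, whose dual basis is $P^{(\alpha,\beta)}_i/h^{(\alpha,\beta)}_i$. Therefore
$$
k_n(x,y)=\frac{\ell^{(\alpha,\beta)}_n}{\ell^{(\alpha,\beta)}_{n+1}h^{(\alpha,\beta)}_n}\,K^{(\alpha,\beta)}_n(x,y).
$$
Using \eqref{E:Jacobi-lc}, the ratio of leading coefficients simplifies:
$$
\frac{\ell^{(\alpha,\beta)}_n}{\ell^{(\alpha,\beta)}_{n+1}}
=\frac{2(n+1)(n+\sigma)_n}{(n+1+\sigma)_{n+1}}
=\frac{2(n+1)(n+\sigma)}{(2n+\sigma)(2n+1+\sigma)}
=\frac{2(n+1)(n+\sigma)}{(2n+\sigma)_2}.
$$
Multiplied by $\omega_j/h^{(\alpha,\beta)}_n$, this is exactly the constant in front of \eqref{E:dual-G^n_i-closed-form}.

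\textbf{Step 3: differentiate the kernel.} Write $K^{(\alpha,\beta)}_n(x,y)=N(x,y)/(x-y)$ with $N(x,y)=P^{(\alpha,\beta)}_{n+1}(x)P^{(\alpha,\beta)}_{n}(y)-P^{(\alpha,\beta)}_{n}(x)P^{(\alpha,\beta)}_{n+1}(y)$. The quotient rule gives
$$
\partial_x K^{(\alpha,\beta)}_n(x,y)=\frac{\partial_xN(x,y)-K^{(\alpha,\beta)}_n(x,y)}{x-y}.
$$
By \eqref{E:DiffJacobi} again, evaluating at $x=\tau_j$ yields
$$
\partial_xN(\tau_j,t)=b^{(\alpha,\beta)}_{n+1,j}P^{(\alpha,\beta)}_n(t)-b^{(\alpha,\beta)}_{nj}P^{(\alpha,\beta)}_{n+1}(t).
$$
Putting $x=\tau_j$ and $y=t$ then gives \eqref{E:dual-G^n_i-closed-form}.

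\textbf{Main obstacle.} The only delicate point is the removable singularity at $t=\tau_j$ (and the diagonal $x=y$ in $K^{(\alpha,\beta)}_n$). I would handle it by proving the identity for $t\neq\tau_j$. Both sides are polynomials in $t$: the left side by Theorem~\ref{T:dual-G^n_i}, and the right side because it equals $\partial_x$ of a polynomial kernel. The identity therefore extends to $t=\tau_j$ by continuity.
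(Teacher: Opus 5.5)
Your proof is correct. Its final step (quotient rule plus \eqref{E:DiffJacobi} applied to $K^{(\alpha,\beta)}_n(x,t)$ at $x=\tau_j$) is exactly what the paper compresses into its last sentence. The difference is in how you reach the key intermediate identity $dG^n_j(t;\alpha,\beta)=\omega_j\,C\,\partial_x K^{(\alpha,\beta)}_n(x,t)|_{x=\tau_j}$, where $C=2(n+1)(n+\sigma)/(h^{(\alpha,\beta)}_n(2n+\sigma)_2)$.

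The paper applies Theorem~\ref{T:Thm-LW2006-II} to the basis $\{G^n_i\}$ itself, which gives $C\,K^{(\alpha,\beta)}_n(x,y)=\sum_{i=0}^{n}G^n_i(x)\,dG^n_i(y;\alpha,\beta)$. It then differentiates in $x$: the constant $G^n_0$ drops out, and each $G^n_i$ turns into $\omega_i^{-1}L^n_i$ by \eqref{E:L^n_i-G^n_i}. Finally it evaluates at $x=\tau_j$ using $L^n_i(\tau_j)=\delta_{ij}$. It never uses the coefficient formula of Theorem~\ref{T:dual-G^n_i}.

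You start from that coefficient formula instead. You recognise $c^{(\alpha,\beta)}_{ji}=\omega_j\frac{\mathrm{d}}{\mathrm{d}x}P^{(\alpha,\beta)}_i(x)|_{x=\tau_j}$ and sum using the classical Christoffel--Darboux formula, obtained from Theorem~\ref{T:Thm-LW2006-II} applied to the orthogonal basis.

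The two routes are equivalent in substance, since both ultimately encode $\frac{\mathrm{d}}{\mathrm{d}x}G^n_i=\omega_i^{-1}L^n_i$ (Lemma~\ref{L:Base-G}). The paper's version is the node-evaluation trick it reuses for dual Lagrange bases in \S\ref{SS:DualBases-Lagrange}. Yours is a direct summation once Theorem~\ref{T:dual-G^n_i} is available, and it exhibits $dG^n_j$ as the representer of the functional $p\mapsto\omega_j\,p'(\tau_j)$. Your explicit leading-coefficient computation and your treatment of the removable singularity at $t=\tau_j$ supply details the paper leaves implicit.
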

\begin{proof}
From \eqref{E:Def-h_k}, \eqref{E:Jacobi-lc} and Theorem~\ref{T:Thm-LW2006-II}, 
we have
$$
2\frac{(n+1)(n+\sigma)}{h^{(\alpha,\beta)}_n(2n+\sigma)_2}
K^{(\alpha,\beta)}_n(x,y)=\sum_{i=0}^{n}G^n_i(x)dG^n_i(y;\alpha,\beta).
$$
Differentiating with respect to $x$ gives
$$
2\frac{(n+1)(n+\sigma)}{h^{(\alpha,\beta)}_n(2n+\sigma)_2}
\frac{\mathrm{d}}{\mathrm{d}x}K^{(\alpha,\beta)}_n(x,y)=
\sum_{i=1}^{n}\omega_i^{-1}L^n_i(x)dG^n_i(y;\alpha,\beta)
$$
(cf.~\eqref{E:Def-G^n_i}, \eqref{E:Def-L^n_i}, \eqref{E:L^n_i-G^n_i}).
Since $L^n_i(\tau_j)=\delta_{ij}$ for $1\leq i,j\leq n$, it follows that
$$
dG^n_j(t;\alpha,\beta)=2\omega_j\frac{(n+1)(n+\sigma)}
                                     {h^{(\alpha,\beta)}_n(2n+\sigma)_2}
  \frac{\mathrm{d}}{\mathrm{d}x}K^{(\alpha,\beta)}_n(x,t)\Big|_{x:=\tau_j}
\qquad (j=1,2,\ldots,n).
$$

Using~\eqref{E:DiffJacobi}, we obtain the desired result.
\end{proof}

Summarizing this part of the paper, note that certain simplifications in the 
representation~\eqref{E:dual-G^n_i-closed-form} occur in the Legendre case 
(i.e., for $\alpha=\beta=0$), because
$$
K^{(0,0)}_n(\tau_j,t)=\frac{P_{n+1}(\tau_j)P_n(t)}{\tau_j-t}\qquad
                                                              (0\leq j\leq n).
$$

\subsection{Dual Gauss-Legendre polynomials}           \label{SS:GL-dual-poly}

Proceeding similarly as in~\S\ref{SS:G^n_j-dual-poly}, we can obtain an 
explicit expression for the \textit{dual GL polynomials}
$dF^n_j(t;\alpha,\beta)\in\Pi_n$ $(0\leq j\leq n;\,\alpha,\beta>-1)$ satisfying 
the conditions:
$$
\left<F^n_i,dF^n_j(\cdot;\alpha,\beta)\right>_{\alpha,\beta}=
                                          \delta_{ij}\qquad (0\leq i,j\leq n)
$$
(cf.~\eqref{E:Def-F^n_i}).

Namely, the following theorem holds true. We omit its proof, as it is analogous
to the justification  of Theorem~\ref{T:dual-G^n_i}, however, we make use of
Corollary~\ref{C:Corollary-1} (see also~\cite[Theorem 7]{Moon2023}) in 
Theorem~\ref{T:Thm-LW2006-I} instead of Lemma~\ref{L:Base-G}.

\begin{theorem}\label{T:dual-F^n_i}
The dual GL polynomials $dF^n_j(t;\alpha,\beta)$ 
$(0\leq j\leq n;\,\alpha,\beta>-1)$ are represented in the Jacobi polynomial 
basis as follows:
\begin{equation*}
dF^n_j(t;\alpha,\beta)=
      \sum_{i=0}^{n}\frac{1}{h^{(\alpha,\beta)}_i}a^{(\alpha,\beta)}_{ji}
                             P^{(\alpha,\beta)}_{i}(t)\qquad (j=0,1,\ldots,n)
\end{equation*}
(cf.~\eqref{E:Def-h_k}), and
$$
a^{(\alpha,\beta)}_{ji}:=(-1)^i\frac{(\beta+1)_i}{i!}+\frac{i+\sigma}{2}
\sum_{\ell=1}^{j}\omega_\ell P^{(\alpha+1,\beta+1)}_{i-1}(\tau_\ell)
$$
for $0\leq i,j\leq n$ (cf.~the convention~\eqref{E:Convention}).
\end{theorem}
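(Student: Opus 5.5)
The proof applies Theorem~\ref{T:Thm-LW2006-I} with $b^m_j:=F^n_j$ ($m=n$), $q_i:=P^{(\alpha,\beta)}_i$ and $h_i:=h^{(\alpha,\beta)}_i$. These $q_i$ are orthogonal with respect to $\left<\cdot,\cdot\right>_{\alpha,\beta}$ by~\eqref{E:JacobiOrtho} and satisfy $q_i\in\Pi_i\setminus\Pi_{i-1}$. Once the coefficients $c_{ij}$ in $P^{(\alpha,\beta)}_i=\sum_{j=0}^n c_{ij}F^n_j$ are known, the theorem gives
$$
dF^n_j(t;\alpha,\beta)=\sum_{i=0}^n h_i^{-1}c_{ij}P^{(\alpha,\beta)}_i(t).
$$
So it suffices to show that $c_{ij}=a^{(\alpha,\beta)}_{ji}$. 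The fact that $F^n_0,\ldots,F^n_n$ form a basis of $\Pi_n$ is needed as a hypothesis. It follows from Lemma~\ref{L:Base-G}, since the $F^n_i$ are obtained from the basis $G^n_0,\ldots,G^n_n$ by an invertible (bidiagonal) transformation, as in~\eqref{E:Def-F^n_i}. Alternatively, Corollary~\ref{C:Corollary-1} shows that every $p\in\Pi_n$ is a combination of the $n+1$ polynomials $F^n_i$, which forces linear independence.

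To find the coefficients, apply Corollary~\ref{C:Corollary-1} to $p:=P^{(\alpha,\beta)}_i$ with $0\le i\le n$:
$$
P^{(\alpha,\beta)}_i(t)=\sum_{j=0}^n\Bigl(P^{(\alpha,\beta)}_i(-1)+\sum_{\ell=1}^{j}\omega_\ell\,\frac{\mathrm{d}}{\mathrm{d}x}P^{(\alpha,\beta)}_i(x)\Big|_{x:=\tau_\ell}\Bigr)F^n_j(t).
$$
By~\eqref{E:Jacobi+1-1}, $P^{(\alpha,\beta)}_i(-1)=(-1)^i(\beta+1)_i/i!$. By~\eqref{E:DiffJacobi}, the derivative at $\tau_\ell$ equals $\frac{i+\sigma}{2}P^{(\alpha+1,\beta+1)}_{i-1}(\tau_\ell)$. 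Hence $c_{ij}=(-1)^i\frac{(\beta+1)_i}{i!}+\frac{i+\sigma}{2}\sum_{\ell=1}^j\omega_\ell P^{(\alpha+1,\beta+1)}_{i-1}(\tau_\ell)=a^{(\alpha,\beta)}_{ji}$, which is the claim.

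Two edge cases need a check. For $i=0$, the convention~\eqref{E:Convention} makes the derivative term vanish, consistent with $P^{(\alpha,\beta)}_0\equiv1$. For $j=0$, the inner sum is empty and $a^{(\alpha,\beta)}_{0i}=P^{(\alpha,\beta)}_i(-1)$.

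There is no real obstacle here. The only point requiring care is the index bookkeeping in Theorem~\ref{T:Thm-LW2006-I}: the transposition $c_{ji}$ there corresponds to $a_{ji}$ here with $i$ the Jacobi degree. A quick sanity check of this orientation is that $\left<F^n_k,dF^n_j\right>_{\alpha,\beta}=\delta_{kj}$ follows by expanding $F^n_k$ in the Jacobi basis via the inverse of the coefficient matrix $(c_{ij})$.
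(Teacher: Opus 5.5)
Your proposal is correct and is exactly the argument the paper intends (its proof is omitted as analogous to Theorem~\ref{T:dual-G^n_i}): you expand each $P^{(\alpha,\beta)}_i$ via Corollary~\ref{C:Corollary-1}, evaluate the endpoint and derivative terms with \eqref{E:Jacobi+1-1} and \eqref{E:DiffJacobi}, and then read off the dual from Theorem~\ref{T:Thm-LW2006-I} using the transposed coefficients. One small nit: since $G^n_{n+1}=-G^n_0$, the change of basis from the $G^n_i$ to the $F^n_i$ is bidiagonal plus a corner entry rather than strictly bidiagonal; it is still invertible (its determinant is $2$), and your spanning argument via Corollary~\ref{C:Corollary-1} settles the basis property in any case.
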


There is also a simple relation between the dual GL polynomials and the dual 
polynomials introduced in \S\ref{SS:G^n_j-dual-poly}.   

\begin{theorem}\label{T:dual-F^n_i-G^n_i}
The following identity holds:
$$
dF^n_j(t;\alpha,\beta)=dF^n_{j-1}(t;\alpha,\beta)+dG^n_{j}(t;\alpha,\beta),
$$
where $j=1,2,\ldots,n$.
\end{theorem}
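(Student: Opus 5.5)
Two routes are available: compare Jacobi coefficients directly, or use duality alone. The coefficient comparison is immediate. By Theorem~\ref{T:dual-F^n_i},
$$
a^{(\alpha,\beta)}_{ji}-a^{(\alpha,\beta)}_{j-1,i}=\frac{i+\sigma}{2}\,\omega_jP^{(\alpha+1,\beta+1)}_{i-1}(\tau_j)=c^{(\alpha,\beta)}_{ji}\qquad(1\leq j\leq n,\ 0\leq i\leq n),
$$
which is exactly the coefficient from Theorem~\ref{T:dual-G^n_i}. Multiplying by $P^{(\alpha,\beta)}_i(t)/h^{(\alpha,\beta)}_i$ and summing over $i$ gives the claim.

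I would present the structural argument as the main proof, since it does not depend on the explicit coefficients and explains why the identity holds. The dual basis is unique: the Gram matrix of a basis is invertible, and $\Pi_n$ is $(n+1)$-dimensional. So it suffices to show that, for fixed $1\leq j\leq n$, the polynomial $q:=dF^n_{j-1}+dG^n_j\in\Pi_n$ satisfies $\left<F^n_i,q\right>_{\alpha,\beta}=\delta_{ij}$ for $0\leq i\leq n$. Then $q=dF^n_j$.

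To compute these inner products, express each $F^n_i$ through the $G$'s by \eqref{E:Def-F^n_i}. For $1\leq i\leq n-1$ we have $F^n_i=G^n_i-G^n_{i+1}$. For the end cases, $F^n_0=G^n_0-G^n_1$ and $F^n_n=G^n_n+G^n_0$, using $G^n_{n+1}=-G^n_0$. Duality then gives
$$
\left<F^n_i,dG^n_j\right>_{\alpha,\beta}=\delta_{ij}-\delta_{i+1,j}\qquad(0\leq i\leq n),
$$
where for $i=n$ the extra term $\left<G^n_0,dG^n_j\right>_{\alpha,\beta}=0$ because $j\geq1$. Adding $\left<F^n_i,dF^n_{j-1}\right>_{\alpha,\beta}=\delta_{i,j-1}$ yields $\delta_{ij}$, as required.

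The only step needing care is the boundary bookkeeping: the convention $G^n_{n+1}=-G^n_0$ and the restriction $j\geq 1$, which makes the $G^n_0$ contribution vanish. Otherwise the argument is a direct check.
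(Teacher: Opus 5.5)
Your proposal is correct, and it contains two proofs. The coefficient comparison in your first paragraph is exactly the paper's proof. The paper observes that $a^{(\alpha,\beta)}_{ji}-a^{(\alpha,\beta)}_{j-1,i}$ equals the $dG^n_j$ coefficient and then invokes \eqref{E:dual-G^n_i-rep}. Its displayed difference carries the subscript $j-1$ on the Jacobi polynomial where $i-1$ is meant; your $P^{(\alpha+1,\beta+1)}_{i-1}(\tau_j)$ is the correct form.

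Your main, structural argument is a genuinely different route. It does not rely on the explicit Jacobi expansions of Theorems~\ref{T:dual-G^n_i} and~\ref{T:dual-F^n_i}; the latter is stated in the paper without proof. Instead it uses only three ingredients:
\begin{itemize}
\item the relation $F^n_i=G^n_i-G^n_{i+1}$;
\item the convention $G^n_{n+1}=-G^n_0$;
\item uniqueness of the dual basis.
\end{itemize}
Your boundary bookkeeping at $i=n$ is right. The one implicit ingredient is that $F^n_0,\ldots,F^n_n$ form a basis of $\Pi_n$. This follows from Corollary~\ref{C:Corollary-1}: they span $\Pi_n$, and there are $n+1$ of them.

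The two approaches buy different things. Yours gives generality and explains why the identity holds. It works for any inner product on $\Pi_n$, not just the Jacobi one, because it is simply the fact that dual bases transform by the inverse transpose of the change-of-basis matrix. Running the same computation at $j=0$, where the $G^n_0$ term coming from $F^n_n$ now contributes, also yields the companion identity $dG^n_0=dF^n_0+dF^n_n$, which the paper does not state. The paper's route is a one-line check once the coefficient formulas are accepted, and it doubles as a consistency check between the two explicit representations.
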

\begin{proof}
It is enough to observe that
$$
a^{(\alpha,\beta)}_{ji}=a^{(\alpha,\beta)}_{j-1,i}+
           \frac{i+\sigma}{2}\omega_jP^{(\alpha+1,\beta+1)}_{j-1}(\tau_j)
$$
and use~\eqref{E:dual-G^n_i-rep} for $j=1,2,\ldots,n$.
\end{proof}

\begin{remark}
The last theorem implies that for given $n$, $0\leq j\leq n$, 
$t\in\mathbb R$ and $\alpha,\beta>-1$ the value $dF^n_j(t;\alpha,\beta)$
can be computed with $O(nj)$ computational complexity 
(cf.~Remark~\ref{R:Clenshaw}).
\end{remark}

\section{Applications}                                  \label{S:Applications}

Let us point out some possible applications of the results presented in this 
article. They are related to: \textit{(i)} the representations of the 
polynomials $G^n_i$ and $F^n_i$; \textit{(ii)} the degree reduction problem for 
polynomials and parametric curves; and \textit{(iii)} dual Lagrange bases.  

\subsection{Representations}                        \label{SS:Representations}

Recently, in~\cite{ChW2026}, new representations of the polynomials $G^n_i$ and 
$F^n_i$ (see~\eqref{E:Def-G^n_i}, \eqref{E:Def-F^n_i}) were derived. More 
precisely, in this article, the aforementioned polynomials, as well as their 
derivatives, were expressed in the \textit{shifted power} basis $(x+1)^j$ and 
the \textit{symmetric Jacobi} basis $P_j^{(\alpha,\alpha)}$ 
(cf.~\S\ref{SS:Jacobi}). Using these representations, it is possible to 
efficient evaluate not only the polynomials $G^n_i$ and $F^n_i$, but also their 
linear combinations in one or many points and thus, in particular, render the 
$d$-dimentinal GL curve (see~\eqref{E:GL-curves}).

We now show that using the dual bases one can give many other representations 
of polynomials $G^n_i$ (and consequently also of the polynomials $F^n_i$).

Let $b^n_0, b^n_1,\ldots,b^n_n$ be a basis of $\Pi_n$. Let 
$d^n_0, d^n_1,\ldots,d^n_n$ be its dual basis corresponding to the inner
product $\left<\cdot,\cdot\right>_{0,0}$ (see~\eqref{E:JacobiInnerP}), i.e.,
$$
\int_{-1}^{1}b^n_i(x)d^n_j(x)\,\mathrm{d}x=\delta_{ij}
$$
for $0\leq i,j\leq n$ (cf.~\S\ref{S:DualBases}). 

Then
$$
(n+1)\frac{P_{n+1}(x)P_n(t)-P_n(x)P_{n+1}(t)}{x-t}=
                                     \sum_{j=0}^{n}d^n_j(t)b^n_j(x),
$$
where $P_k$ is the $k$th Legendre polynomial $(k=n,n+1)$ 
(cf.~Theorem~\ref{T:Thm-LW2006-II} and \eqref{E:Def-h_k}, \eqref{E:Jacobi-lc}).

Setting $t:=\tau_i$ and remembering that $P_n(\tau_i)=0$ for $i=1,2,\ldots,n$
(cf.~Assumption~\ref{A:Assumption-1}), we obtain
$$
\frac{P_n(x)}{x-\tau_i}=-\frac{1}{(n+1)P_{n+1}(\tau_i)}
                                     \sum_{j=0}^{n}d^n_j(\tau_i)b^n_j(x).
$$
Integrating the obtained identity over $x\in[-1,t]$, where $-1\leq t\leq 1$, 
and using~\cite[Eq.~(4.10)]{ChW2026} gives
$$
G^n_i(t)=-\frac{1}{2}+\frac{1}{2}\sum_{j=0}^{n}d^n_j(\tau_i)g^n_j(x),
$$
where
$$
g^n_j(x):=\int_{-1}^{t}b^n_j(x)\,\mathrm{d}x\qquad(j=0,1,\ldots,n).
$$

Summarizing, the coefficients of the expansion of the
polynomials~\eqref{E:Def-G^n_i} in the $g^n_j$ basis are, in fact, evaluations 
of the elements of the dual basis to the basis $b^n_j$ at zeros of the $n$th
Legendre polynomial. In this context, the representations presented
in~\cite{ChW2026} can be also interpreted in this way.

\subsection{Degree reduction}                                \label{SS:DegRed}

The \textit{degree reduction} problem often appears in practical applications, 
for example in approximation theory, numerical analysis, and CAGD, as it is 
related to data compression and data exchange between different CAD or 
computational systems.

In particular, the problem of \textit{constrained degree reduction of 
B\'{e}zier curves} has been extensively studied in recent years. See, e.g., 
\cite{WL2009} (and the references therein), where a technique using the 
so-called \textit{constrained dual Bernstein polynomials} was applied, 
resulting in an algorithm with low computational complexity and good numerical 
properties. 

Let us consider the degree reduction problems for polynomials given in $G^n_i$ 
and $F^n_i$ bases,
$$
w_n(t):=\sum_{i=0}^{n}g_iG^n_i(t),\qquad
v_n(t):=\sum_{i=0}^{n}f_iF^n_i(t)\qquad (g_i,f_i\in\mathbb R).
$$
The goal is to find polynomials $w^\ast_m,v^\ast_m\in\Pi_m$, where $m<n$ 
satisfying
$$
||w_n-w^\ast_m||_{\alpha,\beta}=\min_{w\in\Pi_m}||w_n-w||_{\alpha,\beta},
\qquad
||v_n-v^\ast_m||_{\alpha,\beta}=\min_{v\in\Pi_m}||v_n-v||_{\alpha,\beta}.
$$
Here
$$
||f||_{\alpha,\beta}:=
     \left(\int_{-1}^{1}(1-x)^\alpha(1+x)^{\beta}f^2(x)\;
                    \mathrm{d}x\right)^{\frac{1}{2}}\qquad (\alpha,\beta>-1)
$$
(cf.~\eqref{E:JacobiInnerP}).

It is well-known (see Section~\ref{S:DualBases}) that
$$
w^\ast_m(t)=\sum_{j=0}^{m}g^\ast_jG^n_j(t), 
$$
where
$$
g^\ast_j:=\left<w_n,dG^m_j\right>_{\alpha,\beta}=\sum_{i=0}^{n}g_i
\left<G^n_i,dG^m_j\right>_{\alpha,\beta}\qquad (0\leq j\leq m),
$$
as well as
$$
v^\ast_m(t)=\sum_{j=0}^{m}f^\ast_jF^n_j(t), 
$$
where
$$
f^\ast_j:=\left<v_n,dF^m_j\right>_{\alpha,\beta}=\sum_{i=0}^{n}f_i
\left<F^n_i,dF^m_j\right>_{\alpha,\beta}\qquad (0\leq j\leq m).
$$

The integrals $\left<G^n_i,dG^m_j\right>_{\alpha,\beta}$ and 
$\left<F^n_i,dF^m_j\right>_{\alpha,\beta}$ $(0\leq i\leq n,\ 0\leq j\leq m)$
can be computed, for example, using appropriate Gauss--Jacobi quadratures rules 
with $n$ nodes as $m<n$. In particular, if $\alpha=\beta$ the quadrature 
rule~\eqref{E:GL-quadrature} can be applied (cf.~also 
Remark~\ref{R:GL-quadrature}).

Note that the considered problems are, among others, closely related to the 
degree reduction problem of the parametric GL curves 
${\sf P}_n:[-1,1]\to{\mathbb E}^d$ $(d\in\mathbb N)$ 
introduced in~\cite{Moon2023},
\begin{equation}\label{E:GL-curves}
{\sf P}_n(t):=\sum_{k=0}^{n}{\sf W}_kF^n_k(t)\qquad 
({\sf W}_0,{\sf W}_1,\ldots,{\sf W}_n\in{\mathbb E}^d).
\end{equation}

\begin{remark}
It would be interesting to propose a method for the degree reduction of GL 
curves~\eqref{E:GL-curves}, which is similar to that described in~\cite{WL2009} 
for parametric B\'{e}zier curves, where some nonstandard properties of the 
(constrained) dual Bernstein polynomials and their relations with 
the shifted Jacobi and Hahn orthogonal polynomials were used. Such a method 
would probably have lower computational complexity, but it seems to be quite 
challenging to derive an efficient algorithm in full detail. We leave this 
issue for future research, as it requires further study of the properties of 
the new dual polynomial families introduced in this article.
\end{remark}

\subsection{Dual bases for Lagrange polynomials} \label{SS:DualBases-Lagrange}

Observe that the technique used in the proof of 
Theorem~\ref{T:dual-G^n_i-closed-form} allows us to derive a dual basis
for the \textit{Lagrange polynomial basis} with respect to a chosen inner
product.

Let $m\in\mathbb N$ and $x_0,x_1,\ldots,x_m\in\mathbb R$ be distinct points. We 
define the Lagrange polynomial basis as follows:
$$
\lambda^m_i(x):=\prod_{j=0 \atop j\neq i}^{m}\frac{x-x_j}{x_i-x_j}
                                                      \qquad(i=0,1,\ldots,m)
$$
(cf.~\eqref{E:Def-L^n_i}). Certainly, 
$\mathrm{span}\{\lambda^m_i\,:\,0\leq i\leq m\}=\Pi_m$.

Let us fix an inner product 
$\left<\cdot,\cdot\right>:\Pi_m\times\Pi_m\to\mathbb R$. Let 
$d\lambda^m_0,d\lambda^m_1,\ldots,d\lambda^m_m\in\Pi_m$ be the dual basis for 
the Lagrange polynomial basis with respect to the given inner product. This
means that
$$
\left<\lambda^m_i,d\lambda^m_j\right>=\delta_{ij}
$$
for $0\leq i,j\leq m$ (see Section~\ref{S:DualBases}). 

We can give a closed-form expression for the dual polynomials $d\lambda^m_i$
in terms of the orthogonal polynomials corresponding to the inner product
$\left<\cdot,\cdot\right>$.

\begin{theorem}
The dual polynomials $d\lambda^m_j$ have the following form:
\begin{eqnarray}
\label{E:Proof-I}
d\lambda^m_j(t)&=&\frac{\ell_m}{\ell_{m+1}h_m}
                         \frac{q_{m+1}(x_j)q_m(t)-q_m(x_j)q_{m+1}(t)}{x_j-t}\\
\label{E:Proof-II}                         
               &=&\sum_{i=0}^{m}h_i^{-1}q_i(x_j)q_i(t),
\end{eqnarray}
where $j=0,1,\ldots,m$, $\ell_m$ and $\ell_{m+1}$ denote the leading 
coefficients of the polynomials $q_m$ and $q_{m+1}$, respectively. Here
polynomials $q_0,q_1,\ldots,q_{m+1}$ form the orthogonal system with respect to
the inner product $\left<\cdot,\cdot\right>$,
$$
\left<q_k,q_\ell\right>=\delta_{k\ell}h_k
                                      \qquad (k,\ell=0,1,\ldots,m+1;\;h_k>0),
$$
and $q_k\in\Pi_{k}\setminus\Pi_{k-1}$ $(0\leq k\leq m+1)$.
\end{theorem}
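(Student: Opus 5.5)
We plan to mirror the proof of Theorem~\ref{T:dual-G^n_i-closed-form}, with the Lagrange basis $\lambda^m_i$ in place of $G^n_i$. The argument rests on the Christoffel--Darboux identity of Theorem~\ref{T:Thm-LW2006-II}, applied with $b^m_i:=\lambda^m_i$ and $d^m_i:=d\lambda^m_i$. It gives
$$
k_m(x,y)=\sum_{i=0}^{m}\lambda^m_i(x)\,d\lambda^m_i(y),
$$
where $k_m$ is the kernel~\eqref{E:Ch-D-kernel} built from $q_m,q_{m+1}$ and the associated constants $\ell_m,\ell_{m+1},h_m$.

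First, we substitute $x:=x_j$. Since $\lambda^m_i(x_j)=\delta_{ij}$, the sum collapses to $d\lambda^m_j(y)$. Hence $d\lambda^m_j(t)=k_m(x_j,t)$, which is exactly~\eqref{E:Proof-I} once we write out~\eqref{E:Ch-D-kernel} with $x=x_j$, $y=t$. No differentiation step is needed here, unlike in the proof of Theorem~\ref{T:dual-G^n_i-closed-form}, because the Lagrange basis is itself interpolatory at the nodes.

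For~\eqref{E:Proof-II} we have two routes, and we will present the simpler one. The first uses Theorem~\ref{T:Thm-LW2006-I}. Expanding $q_i$ in the Lagrange basis gives $q_i=\sum_{j}q_i(x_j)\lambda^m_j$, so $c_{ij}=q_i(x_j)$. Theorem~\ref{T:Thm-LW2006-I} then yields $d\lambda^m_j=\sum_{i=0}^{m}h_i^{-1}q_i(x_j)q_i$ immediately. The second route invokes the classical Christoffel--Darboux formula
$$
\sum_{i=0}^{m}h_i^{-1}q_i(x)q_i(y)=k_m(x,y).
$$
The first route is self-contained given the earlier results, and it also shows that~\eqref{E:Proof-I} and~\eqref{E:Proof-II} agree.

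The only delicate point is the removable singularity of~\eqref{E:Proof-I} at $t=x_j$. We will interpret the quotient as the polynomial obtained by division: the numerator is a polynomial in $t$ that vanishes at $t=x_j$. We also need the orthogonal system to extend to degree $m+1$, which the hypothesis assumes. Beyond this, the main (mild) obstacle is simply being careful that Theorem~\ref{T:Thm-LW2006-II} applies to an arbitrary inner product on $\Pi_{m+1}$, which it does as stated in~\cite{LW2006}.
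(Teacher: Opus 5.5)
Your argument for \eqref{E:Proof-I} is the paper's argument: specialize the kernel identity of Theorem~\ref{T:Thm-LW2006-II} (with $b^m_i=\lambda^m_i$) at $x:=x_j$ and use $\lambda^m_i(x_j)=\delta_{ij}$.

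For \eqref{E:Proof-II}, the paper takes your second route and simply quotes the classical expansion $k_m(x,y)=\sum_{i=0}^{m}h_i^{-1}q_i(x)q_i(y)$. Your preferred first route uses Theorem~\ref{T:Thm-LW2006-I} with $c_{ij}=q_i(x_j)$, which holds because Lagrange interpolation is exact on $\Pi_m$. It is correct and just as short. It also has an advantage: it never uses the Christoffel--Darboux identity, so \eqref{E:Proof-II} holds for \emph{every} inner product on $\Pi_m$ and needs only $q_0,\ldots,q_m$.

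That advantage matters because your closing remark is too generous. Theorem~\ref{T:Thm-LW2006-II} is not valid for an arbitrary inner product on $\Pi_{m+1}$. The Christoffel--Darboux formula needs $\langle xf,g\rangle=\langle f,xg\rangle$, which gives a three-term recurrence; measure-type products such as \eqref{E:JacobiInnerP} satisfy this. For a counterexample, take $\langle f,g\rangle=\int_{-1}^{1}(fg+f'g')\,\mathrm{d}x$. Then $q_0=1$, $q_1=x$, $q_2=x^2-\frac13$, $h_0=2$ and $h_1=\frac83$. The true reproducing kernel of $\Pi_1$ is $\frac12+\frac38xy$, but $k_1(x,y)=\frac38\bigl(xy+\frac13\bigr)$.

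The paper's statement carries the same implicit assumption. So \eqref{E:Proof-I}, in your proof and in the paper's, should be read under it, whereas your route to \eqref{E:Proof-II} needs no such restriction.
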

\begin{proof}
From Theorem~\ref{T:Thm-LW2006-II}, we have
$$
\frac{\ell_m}{\ell_{m+1}h_m}\frac{q_{m+1}(x)q_m(t)-q_m(x)q_{m+1}(t)}{x-t}=
\sum_{i=0}^{m}\lambda^m_i(x)d\lambda^m_i(t).
$$
Setting $x:=x_j$ $(j=0,1,\ldots,m)$ and taking into account that 
$\lambda^m_i(x_{j})=\delta_{ij}$, we obtain~\eqref{E:Proof-I}. 

Equation~\eqref{E:Proof-II} follows from the well-known representation of the
Christoffel–Darboux kernel~\eqref{E:Ch-D-kernel} in the appropriate orthogonal 
basis.
\end{proof}

\section{Conclusions and future work}                    \label{S:Conclusions}

We derive explicit representations for two new families of dual polynomials 
associated with the GL polynomials and the Jacobi inner product. These dual 
polynomials can be used, in particular, for the solution of certain 
approximation problems related to the degree reduction of parametric curves. 
We also propose a general methods for deriving new representations of the GL 
polynomials and constructing the Lagrange dual bases.

It seems possible to derive a recurrence relation for the dual polynomials 
$dG^n_j$ with respect to $j$. Using such a recurrence, it will probably be 
possible to evaluate all values $dG^n_0(t;\alpha,\beta), 
dG^n_1(t;\alpha,\beta),\ldots,dG^n_n(t;\alpha,\beta)$ for a given 
$t\in\mathbb R$ and $\alpha,\beta>-1$ faster than in $O(n^2)$ time 
(cf.~Remark~\ref{R:Clenshaw}).

It would also be important to develop a detailed method for degree reduction 
of GL curves that is numerically stable and has low computational complexity.

We leave these questions for future research.

\bibliographystyle{elsart-num-sort}
\biboptions{sort&compress}
\bibliography{GL-dual-basis}

\begin{thebibliography}{10}
\expandafter\ifx\csname url\endcsname\relax
  \def\url#1{\texttt{#1}}\fi
\expandafter\ifx\csname urlprefix\endcsname\relax\def\urlprefix{URL }\fi

\bibitem{Agoston2005}
M.~K. Agoston, Computer graphics and geometric modelling. Implementation \&
  algorithms, 1st ed., Springer, London, 2005.

\bibitem{BJ1998}
{B. J\"{u}ttler}, {The dual basis functions of the Bernstein polynomials},
  Advances in Computational Mathematics 8 (1998) 345--352.

\bibitem{BJ2007}
M.~Barto\v{n}, {B. J\"{u}ttler}, {Computing roots of polynomials by quadratic
  clipping}, Computer Aided Geometric Design 24 (2007) 125--141.

\bibitem{ChW2026}
F.~Chudy, P.~Wo\'{z}ny, Evaluation of {G}auss--{L}egendre curves, 2026,
  \url{http://arxiv.org/abs/2604.17331}.

\bibitem{ZC1987}
Z.~Ciesielski, {The basis of B-splines in the space of algebraic polynomials},
  Ukrainian Mathematical Journal 38 (1987) 311--315.

\bibitem{Farin2002}
G.~Farin, Curves and surfaces for {C}omputer-{A}ided {G}eometric {D}esign. A
  practical guide, 5th ed., Academic Press, Boston, 2002.

\bibitem{WG}
W.~Gautschi, Numerical analysis, 2nd ed., Birkh\"{a}user, 2011.

\bibitem{RNG}
R.~N. Goldman, {Dual polynomial bases}, Journal of Approximation Theory 79
  (1994) 311--346.

\bibitem{GXZ06}
Z.~Guohui, L.~Xiuping, S.~Zhixun, {A dual functional to the univariate
  B-spline}, Journal of Computational and Applied Mathematics 195 (2006)
  292--299.

\bibitem{HT2013}
N.~Hale, A.~Townsend, Fast and accurate computation of {G}auss--{L}egendre and
  {G}auss--{J}acobi quadrature nodes and weights, SIAM Journal on Scientific
  Computing 35 (2013) A652--A674.

\bibitem{JM2018}
F.~Johansson, M.~Mezzarobba, Fast and rigorous arbitrary-precision computation
  of {G}auss--{L}egendre quadrature nodes and weights, SIAM Journal on
  Scientific Computing 40 (2018) C726--C747.

\bibitem{Kim2026}
S.~H. Kim, H.-P. Schr\"{o}cker, H.~P. Moon, Fundamental geometric operations
  for {G}auss--{L}egendre curves via linear transformations of control edges,
  Computer Aided Geometric Design 128 (2026) 102575.

\bibitem{KLS2010}
R.~Koekoek, P.~A. Lesky, R.~F. Swarttouw, Hypergeometric orthogonal polynomials
  and their $q$-analogues, Springer Monographs in Mathematics, Springer Berlin
  Heidelberg, 2010.

\bibitem{LW2006b}
S.~Lewanowicz, P.~Wo\'{z}ny, {Connections between two-variable Bernstein and
  Jacobi polynomials on the triangle}, Journal of Computational and Applied
  Mathematics 197 (2006) 520--533.

\bibitem{LW2006}
S.~Lewanowicz, P.~Wo\'{z}ny, Dual generalized {B}ernstein basis, Journal of
  Approximation Theory 138 (2006) 129--150.

\bibitem{LW2011}
S.~Lewanowicz, P.~Wo\'{z}ny, {B\'{e}zier representation of the constrained dual
  Bernstein polynomials}, Applied Mathematics and Computation 218 (2011)
  4580--4586.

\bibitem{LW2011b}
S.~Lewanowicz, P.~Wo\'{z}ny, {Multi-degree reduction of tensor product
  B\'{e}zier surfaces with general boundary constraints}, Applied Mathematics
  and Computation 217 (2011) 4596--4611.

\bibitem{LWK2011}
S.~Lewanowicz, P.~Wo\'{z}ny, P.~Keller, {Polynomial approximation of rational
  B\'{e}zier curves with constraints}, Numerical Algorithms 59 (2012) 607--622.

\bibitem{LZLW2009}
L.~Liu, L.~Zhang, B.~Lin, G.~Wang, {Fast approach for computing roots of
  polynomials using cubic clipping}, Computer Aided Geometric Design 26 (2009)
  547--559.

\bibitem{Moon2023}
H.~P. Moon, S.~H. Kim, S.-H. Kwon, {G}auss--{L}egendre polynomial basis for the
  shape control of polynomial curves, Applied Mathematics and Computation 451
  (2023) 127995.

\bibitem{Moon2026}
H.~P. Moon, S.~H. Kim, S.-H. Kwon, A novel method for manipulating polynomial
  curves by the {G}auss--{L}egendre control polygon with points interpolating
  property, Applied Mathematics and Computation 512 (2026) 129760.

\bibitem{RN2007}
A.~Rababah, M.~Al-Natour, {The weighted dual functionals for the univariate
  Bernstein basis}, Applied Mathematics and Computation 186 (2007) 1581--1590.

\bibitem{RN2008}
A.~Rababah, M.~Al-Natour, {Weighted dual functions for Bernstein basis
  satisfying boundary conditions}, Applied Mathematics and Computation 199
  (2008) 1581--1590.

\bibitem{RH2026}
A.~Ramanantoanina, K.~Hormann, G{L}-$k$ curves: a family of polynomial curves
  for intuitive modelling, Computer Aided Geometric Design 127 (2026) 102558.

\bibitem{STW2011}
J.~Shen, T.~Tang, L.-L. Wang, Orthogonal polynomials and related approximation
  results, in: Spectral methods: algorithms, analysis and applications, vol.~41
  of Springer Series in Computational Mathematics, Springer Berlin Heidelberg,
  2011.

\bibitem{W2012}
P.~Wo\'{z}ny, {Simple algorithms for computing the B\'{e}zier coefficients of
  the constrained dual Bernstein polynomials}, Applied Mathematics and
  Computation (2012){, accepted for publication}.

\bibitem{PW2013}
P.~Wo\'{z}ny, Construction of dual bases, Journal of Computational and Applied
  Mathematics 245 (2013) 75--85.

\bibitem{PW2014}
P.~Wo\'{z}ny, Construction of dual {B}-spline functions, Journal of
  Computational and Applied Mathematics 260 (2014) 301--311.

\bibitem{WL2009}
P.~Wo\'{z}ny, S.~Lewanowicz, Multi-degree reduction of {B}\'{e}zier curves with
  constraints, using dual {B}ernstein basis polynomials, Computer Aided
  Geometric Design 26 (2009) 566--579.

\bibitem{WL2010}
P.~Wo\'{z}ny, S.~Lewanowicz, {Constrained multi-degree reduction of triangular
  B\'{e}zier surfaces, using dual Bernstein polynomials}, Journal of
  Computational and Applied Mathematics 235 (2010) 785--804.

\bibitem{Zhang2010}
L.~Zhang, J.~Tan, Z.~Dong, {The dual bases for the B\'{e}zier-Said-Wang type
  generalized Ball polynomial bases and their applications}, Applied
  Mathematics and Computation 217 (2010) 3088--3101.

\bibitem{Zhang2009a}
L.~Zhang, J.~Tan, H.~Wu, Z.~Liu, {The weighted dual functions for
  Wang-B\'{e}zier type generalized Ball bases and their applications}, Applied
  Mathematics and Computation 215 (2009) 22--36.

\bibitem{Zhang2009b}
L.~Zhang, H.~Wu, J.~Tan, {Dual bases for Wang-B\'{e}zier basis and their
  applications}, Applied Mathematics and Computation 214 (2009) 218--227.

\bibitem{Zhang2009c}
L.~Zhang, H.~Wu, J.~Tan, {Dual basis functions for the NS power and their
  applications}, Applied Mathematics and Computation 207 (2009) 434--441.

\end{thebibliography}

\end{document}